\newcommand{\xlabel}[1]{
                        \label{#1}
                        \ifthenelse{\boolean{xlabels}}
                                   {\marginpar[\hfill{\tiny #1}]{{\tiny #1}}}
                                   {}
                       }
\newcommand{\ZZ}{\mathbb{Z}}
\newcommand{\CC}{\mathbb{C}}
\newcommand{\QQ}{\mathbb{Q}}
\newcommand{\FF}{\mathbb{F}}
\newcommand{\problem}[1]
           {\ifthenelse{\boolean{probleme}}
                       {{\bf(PROBLEM: #1)\bf}}
                       {}
           }
\newcommand{\zukunft}[1]
           {\ifthenelse{\boolean{zukuenftiges}}
                       {{\bf(AUSBAUM\"OGLICHKEIT: #1)\bf}}
                       {}
           }
\newcommand{\extra}[1]
           {\ifthenelse{\boolean{extras}}
                       {{\bf EXTRA #1 EXTRA\bf}}
                       {}
           }
\newcommand{\ignore}[1]
           {\ifthenelse{\boolean{ignore}}
                       {{\bf IGNORE #1 IGNORE\bf}}
                       {}
           }
\DeclareMathOperator{\codim}{codim}
\DeclareMathOperator{\spec}{spec}
\DeclareMathOperator{\rad}{rad}
\theoremstyle{plain}
\newtheorem{thm}{Theorem}
\newtheorem{cor}[thm]{Corollary}
\numberwithin{thm}{subsection} 
\newtheorem*{thm*}{Theorem}
\newtheorem*{conj*}{Conjecture}
\newtheorem*{verm*}{Vermutung}
\theoremstyle{definition}
\newtheorem{example}[thm]{Example}
\newtheorem{notation}[thm]{Notation}
\numberwithin{equation}{section}
\newcommand{\nosubsections}{\renewcommand{\thethm}{\thesection.\arabic{thm}}
                            \setcounter{thm}{0}
                           }
\newcommand{\cref}[3]{(\ref{#1}, #2 \ref{#3})}
\date{\today}
\newcommand{\secemail}{
\setlength{\unitlength}{1pt}
bothmer
\begin{picture}(0,1)
\put(0,0){m}
\put(-5,0){@}
\end{picture}
ath.uni-hannover.de}
\begin{document}

\title{Focal values of plane cubic centers}

\address{Courant Research Centre ''Higher Order Structures''\\
	Mathematisches Institiut\\ 
	University of G\"ottingen\\
         Bunsenstrasse 3-5\\ 
          D-37073 G\"ottingen
         }

\email{\secemail}

\urladdr{http://www.uni-math.gwdg.de/bothmer}

\thanks{Supported by the German Research Foundation 
(Deutsche Forschungsgemeinschaft (DFG)) through 
the Institutional Strategy of the University of G\"ottingen}

\author{Hans-Christian Graf v. Bothmer}
\author{Jakob K\"oker}

\begin{abstract}
We prove that the vanishing of $11$ focal values is not sufficient to ensure that a plane cubic system has a center. 
\end{abstract} 

\maketitle

\newcommand{\dual}{^*}
\newcommand{\barf}{\bar{f}}
\newcommand{\barg}{\bar{g}}
\newcommand{\barh}{\bar{h}}
\newcommand{\bara}{\bar{a}}
\newcommand{\barJ}{\bar{J}}
\newcommand{\barN}{\bar{N}}
\newcommand{\barH}{\bar{H}}

\newcommand{\ZZx}{\ZZ[x_1,\dots,x_n]}
\newcommand{\QQx}{\QQ[x_1,\dots,x_n]}
\newcommand{\CCx}{\CC[x_1,\dots,x_n]}
\newcommand{\FFp}{\FF_p}
\newcommand{\FFx}{\FFp[x_1,\dots,x_n]}
\renewcommand{\AA}{\mathbb{A}}

\newcommand{\zoladek}{\.Zo\l\c adek\,}
\newcommand{\zoladeks}{\.Zo\l\c adek's\,}

\section{Introduction}
\nosubsections
In 1885 Poincar\'e asked when the differential equation
\[
y' = - \frac{x + p(x,y)}{y+q(x,y)} =: - \frac{P(x,y)}{Q(x,y}
\]
with convergent power series $p(x,y)$ and $q(x,y)$ starting with quadratic terms, 
has stable solutions in the neighborhood of the equilibrium solution
$(x,y)=(0,0)$. This means that in such a neighborhood the solutions of the
equivalent plane autonomous system
\begin{align*}
	\dot{x} &= y + q(x,y) = Q(x,y)\\
	\dot{y} &= -x - p(x,y) = -P(x,y)
\end{align*}
are closed curves around $(0,0)$.

Poincar\'e showed that one can iteratively find a formal power series
$F = x^2+y^2+f_3(x,y)+f_4(x,y)+\dots$ such that
\[
	\det \begin{pmatrix} F_x & F_y \\ P & Q \end{pmatrix} = \sum_{j=1}^\infty s_j(x^{2j+2}+y^{2j+2})
\]
with $s_j$ polynomials in the coefficients of $P$ and $Q$.
If all $s_j$ vanish, and $F$ is convergent then $F$ is a constant of motion, i.e. its gradient field
satisfies $Pdx+Qdy=0$. Since $F$ starts with $x^2+y^2$ this shows that close to the origin all integral curves are closed and the system is stable. Therefore the $s_j$'s are
called the {\sl focal values} of $Pdx+Qdy$. Often also the notation $\eta_{2j} := s_j$ is used, and the $\eta_i$ are called {\sl Liapunov quantities}.

Poincar\'e also showed, that if an analytic constant of motion exists, the focal values must vanish.
Later Frommer \cite{Frommer} proved that the systems above are stable if and only if all focal values vanish even without the assumption of convergence of $F$. (Frommer's proof contains a gap which can be closed \cite{vWahlGap})

Unfortunately it is in general impossible to check this condition for a given differential equation because
there are infinitely many focal values. In the case where $P$ and $Q$ are polynomials of degree
at most $d$, the $s_j$ are polynomials in finitely many unknowns. Hilbert's Basis Theorem then implies
that the ideal $I_\infty = (s_1,s_2,\dots)$ is finitely generated, i.e there exists an integer $m := m(d)$
such that
\[
		s_1 = s_2 = \dots = s_{m(d)} = 0  \implies s_j = 0 \quad\forall j.
\]
This shows that a finite criterion for stability exists, but due to the indirect proof of
Hilbert's Basis Theorem no value for $m(d)$ is obtained. In fact even today only $m(2)=3$ is known. 
\zoladek \cite{ZoladekEleven} and Christopher \cite{ChristopherEleven} showed that $m(3) \ge 11$.
Since the number of variables for $d=2$ is six and $m(2)=6-3$ it has 
been conjectured that for $d=3$ with $14$ variables one has $m(3)=14-3=11$.

It is the purpose of this note to prove $m(3) \ge 12$. 

The most naive approach to this problem is to calculate a Gr\"obner Basis of $I_{11}= (s_1,\dots,s_{11})$ and
prove that $s_{12} \not\in I_{11}$ by the usual ideal membership test. Unfortunately this is not feasible, since
the $s_j$ are very complicated. They involve $14$ variables and are of weighted degree $2j$. 
For example $s_5$ has already $5348$ terms and takes about $1.5$ hours on a
Powerbook G4 to calculate. The polynomials $s_j$, $j\ge 6$ can not at the moment be determined by
computer algebra systems.

\zoladek and Christopher therefore deduce their result geometrically. They exhibit a component 
$Y_{11} \subset X_\infty = V(I_\infty)$ that has codimension $11$ in the space of all possible $(P,Q)$ of degree at most three. Finding a component of codimension $12$ is not an easy task, and indeed we choose a different approach. We prove that there exist a codimension $11$ family plane autonomous system of degree $3$ with
a {\sl focus} for which nevertheless the first $11$ focal values vanish, but 12th one doesn't. For this
we look at the system
\begin{align*}
\dot{x} &=y+3x^2 + 8xy + 5y^2+3x^3 + 25x^2y + 20xy^2 + 18y^3\\
\dot{y} &= -(x+27x^2 + 9xy + 22y^2+11x^3 + 20x^2y + 4xy^2 + 3y^3)
\end{align*}
and prove that for this system $s_j = 0 \mod 29$ for $j \le 11$ while $s_{12} \not= 0 \mod 29$. Checking that
furthermore the Jacobian matrix of $s_1,\dots,s_{11}$ has full rank modulo $29$ for this system, we can apply  a theorem of Schreyer \cite{smallFields} to show the existence of the desired family of foci over $\CC$. From this we deduce that $s_{12} \not\in I_{11} = (s_1,\dots,s_{11})$. If fact we even prove the
stronger result $s_{12} \not \in \rad I_{11}$.

Since for given a given system one can evaluate the $s_j$ using Frommers algorithm \cite{martin} without knowing the complete Polynomials, this approach is feasible. 

We found the above system by performing a random search. Heuristically each $s_i$ vanishes mod $29$ for about one of every $29$ differential equations \cite{irred}. So we expect to find an example as above after checking $29^{11} \approx 10^{16}$ random examples. By parametrizing
$s_1$ and $s_2$ we can improve this to $29^9 \approx 10^{13}$ random examples. Indeed we found the example after about $8 \times 10^{12}$ trials. Using an improved version \cite{centerfocusweb} of the program \cite{strudelweb} this took 1246 CPU-days. Since this search is easily parallelizable we could do this calculation in about one month by distributing the work to several computers.

We would like to thank the {\sl Regionales Rechenzentrum f\"ur Niedersachsen (RRZN)}
and the {\sl Institut f\"ur Systems Engineering, Fachgebiet Simulation} for providing the necessary CPU time. Also we are grateful to Colin Christopher who checked our example using REDUCE \cite{reduce}.

\section{The Proof}
\nosubsections

\begin{notation}
If $I \subset \ZZx$ is an ideal and $X_{\ZZ}=V(I) \subset \AA^n_\ZZ$ is the variety over $\spec \ZZ$ defined
by $I$, then we denote by $X_{\FFp}$ the fiber of $X_\ZZ$ over $\FFp$ for any prime $p$. Furthermore we donote by
$X_\CC$ the variety defined by $I$ over $\CC$.
\end{notation}

\begin{thm}[Schreyer] \label{tSchreyer}
Let $I=(f_1,\dots,f_k) \subset \ZZx$ be an ideal and $X_\ZZ = V(I)$. If $x \in X_{\FFp}$ is a point
with $\codim T_{X_{\FFp},x} = k$ then there exists an irreducible component $Y_\ZZ \subset X_\ZZ$ with
$x \in Y_\ZZ$ and $Y_\ZZ \not\subset X_{\FFp}$. In particular $Y_\CC \not= \emptyset$
\end{thm}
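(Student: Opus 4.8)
The plan is to deduce Schreyer's theorem from the fibre-dimension semicontinuity together with a local analysis of $X_\ZZ$ at the point $x$. First I would work locally: replace $\ZZx$ by the local ring $\sO_{\AA^n_\ZZ, x}$ at the $\FF_p$-point $x$ and let $R$ be its completion (or just the local ring itself). The hypothesis $\codim T_{X_{\FF_p}, x} = k$, where $k$ is the number of generators, forces the Jacobian $\partial(f_1,\dots,f_k)/\partial(x_1,\dots,x_n)$ evaluated at $x$ to have rank $k$ modulo $p$. Hence over $\ZZ_{(p)}$ (the localisation at $p$, before reducing mod $p$) there is a $k\times k$ minor of this Jacobian that is a unit in $\sO_{\AA^n_\ZZ, x}$. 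By the Jacobian criterion this shows $X_\ZZ$ is smooth over $\spec \ZZ_{(p)}$ at $x$ of relative dimension $n-k$; in particular $\sO_{X_\ZZ, x}$ is a regular local ring, hence a domain, so there is a unique irreducible component $Y_\ZZ$ of $X_\ZZ$ through $x$, and $\dim Y_\ZZ = (n-k) + 1$ (adding the dimension of $\spec\ZZ$).

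Next I would argue that $Y_\ZZ \not\subset X_{\FF_p}$. Since $Y_\ZZ$ is integral with a point $x$ lying over the closed point $(p) \in \spec\ZZ$, either $Y_\ZZ$ dominates $\spec\ZZ$ or $Y_\ZZ$ is contained in the fibre $\AA^n_{\FF_p}$. In the latter case $\dim Y_\ZZ = \dim X_{\FF_p}$ locally at $x$ would be at most $n-k$, contradicting $\dim Y_\ZZ = n-k+1$ computed above (here one uses that $X_{\FF_p}$ near $x$ has dimension exactly $n - \codim T_{X_{\FF_p},x} = n-k$ because $X_{\FF_p}$ is a complete intersection cut out by $k$ equations whose Jacobian has rank $k$, so it is smooth of that dimension there). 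Therefore $Y_\ZZ$ dominates $\spec\ZZ$, so its generic point maps to the generic point $(0)$ of $\spec\ZZ$; equivalently the ideal of $Y_\ZZ$ does not contain $p$, i.e. $Y_\ZZ \not\subset X_{\FF_p}$.

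Finally, $Y_\CC \neq \emptyset$: since $Y_\ZZ$ dominates $\spec\ZZ$, its generic fibre $Y_\QQ$ is a nonempty $\QQ$-variety, and base change to $\CC$ keeps it nonempty; equivalently, $Y_\ZZ \otimes_\ZZ \CC$ is the vanishing locus of an ideal strictly smaller than the unit ideal (it does not contain $1$, since it does not even contain $p$ after we know $p$ is not nilpotent mod the ideal of $Y_\ZZ$), so it cuts out a nonempty subvariety of $\AA^n_\CC$.

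The main obstacle is the dimension bookkeeping in the second paragraph: one must justify carefully that the \emph{local} dimension of $X_\ZZ$ at $x$ really is $n-k+1$ and that this descends correctly when intersecting with the fibre, i.e. that no spurious component of smaller dimension through $x$ can sneak in. This is handled cleanly by the Jacobian/smoothness criterion over $\ZZ_{(p)}$, which simultaneously gives regularity of $\sO_{X_\ZZ,x}$ (hence a \emph{unique} component $Y_\ZZ$ through $x$, killing the "spurious component" worry) and the exact relative dimension; the flatness of a smooth morphism over $\spec\ZZ_{(p)}$ then gives the fibre-dimension comparison for free.
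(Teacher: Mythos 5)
The paper offers no proof of its own: it simply cites Schreyer's article and a second reference. Your proposal therefore fills a gap that the paper delegates to the literature, and it does so correctly, along the same lines as the cited sources: the rank-$k$ Jacobian forces $X_\ZZ$ to be smooth over $\ZZ_{(p)}$ at $x$, hence $\sO_{X_\ZZ,x}$ is regular, hence a domain, giving a unique component $Y_\ZZ$ through $x$; then one rules out $Y_\ZZ \subset \AA^n_{\FF_p}$ and concludes $Y_\QQ$, hence $Y_\CC$, is nonempty. All of the steps check out. Two small remarks. First, the dimension bookkeeping you flag as the main obstacle can be bypassed entirely: smooth implies flat over the DVR $\ZZ_{(p)}$, so $p$ is a nonzerodivisor in $\sO_{X_\ZZ,x}$; since that ring is a domain, $p\neq 0$ there, i.e.\ $p \notin I(Y_\ZZ)$, which is exactly $Y_\ZZ \not\subset \AA^n_{\FF_p}$. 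This avoids invoking the dimension formula for schemes of finite type over $\ZZ$, which is true but slightly delicate to quote. Second, the parenthetical ``it does not contain $1$, since it does not even contain $p$'' is not a valid implication as literally stated; the clean justification is that $\ZZ[x_1,\dots,x_n]/I(Y_\ZZ)$ is a domain containing $\ZZ$, hence $\ZZ$-torsion-free, so tensoring with $\QQ$ and then $\CC$ stays nonzero. Your first sentence of that paragraph already says this correctly, so no real harm. Finally, note that nothing in your argument uses that $x$ is an $\FF_p$-rational point rather than an arbitrary closed point of the fibre; the statement and proof go through with residue field $\FF_{p^r}$, which is good since the theorem is stated for a general point of $X_{\FF_p}$.
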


\begin{proof} This is a special case of a theorem of Schreyer \cite{smallFields}. See also \cite{newFamily} for a proof.
\end{proof}

\begin{figure}[h] 
\includegraphics*[width=10cm]{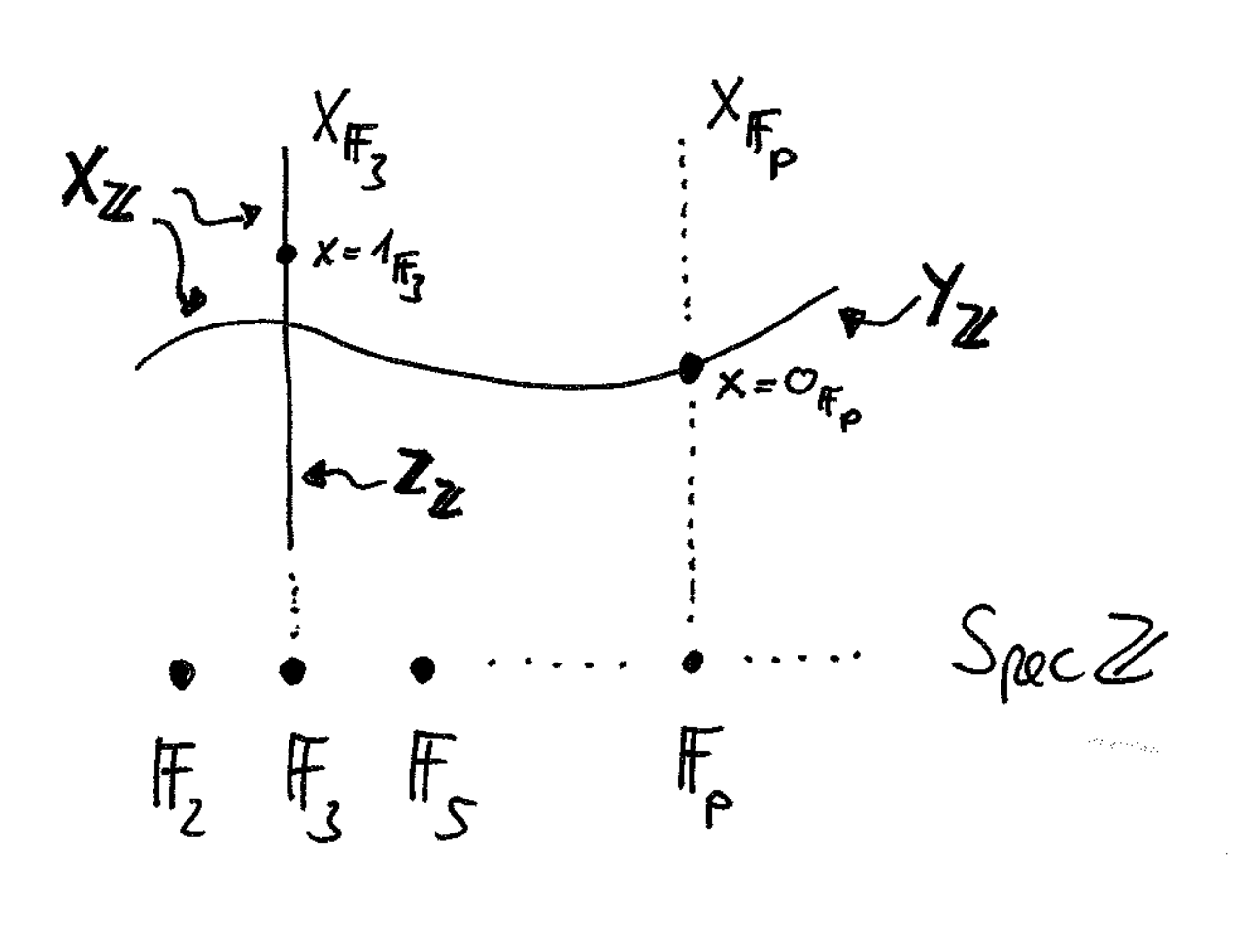}
\caption{A variety over $\spec \ZZ$} \label{fSpecZZ}

\end{figure}

\begin{example}
Consider $X_\ZZ = V(3x) \subset \AA_\ZZ^1$. This variety has two components over $\ZZ$ namely
$Y_\ZZ=V(x)$ and $Z_\ZZ = V(3)$. Since $3=0$ is true only in $\FF_3$ we have $Z_\ZZ = Z_{\FF_3}$. Furthermore $Z_\CC = \emptyset$. On the other hand $x=0$ is possible over all $\FFp$ and $Y_\CC \not=\emptyset$. See Figure \ref{fSpecZZ}.

Indeed, if we consider the point $x=0\in X_{\FFp}$, $p\not=3$ then we have that the derivative $(3x)'=3 \not=0$ and the tangent space $T_{0,X_{\FF_p}}$ has codimension $1$. Therefore the Theorem applies and the component $Y_\ZZ$ containig $x=0_{\FFp}$ is not contained in $X_{\FFp}$.

Since $3\cdot 1 = 0 \in \FF_3$ we can also consider the point $x=1 \in X_{\FF_3}$. Here we have $(3x)'=3=0$ and 
the tangent space $T_{1,X_{\FF_3}}$ has codimension $0$. Hence the Theorem does not apply, and indeed
the component $Z_\ZZ = Z_{\FF_3}$ containing $x=1_{\FF_3}$ is completely contained in $X_{\FF_3}$.
\end{example}

\begin{cor} \label{cNotVanish}
If in the situation of Theorem \ref{tSchreyer} we have a further polynomial $g \in \ZZx$ satisfying 
$g(x) \not=0 \in \FFp$ then $g$ does not vanish on $X_\CC$.
\end{cor}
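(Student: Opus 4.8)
The plan is to feed the component produced by Theorem~\ref{tSchreyer} through a base change from $\ZZ$ to $\CC$. First I would apply Theorem~\ref{tSchreyer} to obtain an irreducible component $Y_\ZZ \subseteq X_\ZZ$ with $x \in Y_\ZZ$, $Y_\ZZ \not\subseteq X_{\FFp}$ and $Y_\CC \neq \emptyset$; let $\mathfrak{p} \subset \ZZx$ be the minimal prime over $I$ that corresponds to $Y_\ZZ$. The point $x$ corresponds to a maximal ideal $\mathfrak{m} \subset \ZZx$ containing $p$, and $x \in Y_\ZZ$ forces $\mathfrak{p} \subseteq \mathfrak{m}$; since $g(x) \neq 0$ in $\FFp$ means $g \notin \mathfrak{m}$, we get $g \notin \mathfrak{p}$. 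This is the one place where the hypothesis $g(x)\neq 0$ is used.

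Next, since $Y_\CC \subseteq X_\CC$ it is enough to show that $g$ does not vanish identically on $Y_\CC = V(\mathfrak{p}\,\CC[x_1,\dots,x_n])$, and by the Nullstellensatz this amounts to showing that the image $\bar g$ of $g$ in $A \otimes_\ZZ \CC$ is not nilpotent, where $A := \ZZx/\mathfrak{p}$. Here I would use that $Y_\CC \neq \emptyset$ forces $\mathfrak{p} \cap \ZZ = (0)$ (otherwise $\mathfrak{p}$ would contain a nonzero integer, a unit in $\CC$, so $\mathfrak{p}\,\CC[x_1,\dots,x_n]$ would be the unit ideal and $Y_\CC$ empty). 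Hence $\ZZ \hookrightarrow A$, so $A$ is a torsion‑free, hence flat, $\ZZ$‑module, giving $A \hookrightarrow A \otimes_\ZZ \CC$; moreover $A \otimes_\ZZ \QQ$ is a localization of the domain $A$, hence a domain, and since $\QQ$ is perfect its base change $A \otimes_\ZZ \CC = (A \otimes_\ZZ \QQ) \otimes_\QQ \CC$ is reduced. So if $\bar g$ were nilpotent it would already be zero in $A \otimes_\ZZ \CC$, hence zero in $A$, i.e. $g \in \mathfrak{p}$, contradicting the first paragraph. Therefore $g$ does not vanish on $Y_\CC$, and a fortiori not on $X_\CC$.

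The only substantive step is this base change: one must know that the component $Y_\ZZ$ coming out of Theorem~\ref{tSchreyer} really persists into characteristic zero together with its generic point — which is exactly what $Y_\CC \neq \emptyset$, equivalently $\mathfrak{p}\cap\ZZ = 0$, encodes — and that nothing degenerates so as to turn $g$ into a nilpotent rather than an honestly nonzero function, which is where reducedness in characteristic zero is used. An essentially equivalent alternative, which sidesteps the reducedness remark, is to apply Theorem~\ref{tSchreyer} instead to the ideal $(f_1,\dots,f_k,\,gt-1) \subset \ZZ[x_1,\dots,x_n,t]$ at the point $(x, g(x)^{-1})$: the extra variable together with the extra equation $gt-1$, which involves $dt$ with the invertible coefficient $g(x)$ at that point, raises the codimension of the tangent space to $k+1$, so the theorem yields a nonempty complex component on which $g$ is invertible; projecting it to $X_\CC$ shows $g$ does not vanish there. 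In that version the only thing to verify carefully is the tangent‑space codimension count, which is the immediate computation just indicated.
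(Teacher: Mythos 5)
Your proof is correct and follows essentially the same route as the paper's: apply Theorem~\ref{tSchreyer} to obtain a component $Y_\ZZ$ with $x\in Y_\ZZ$ and $Y_\CC\neq\emptyset$, and then argue that $g$ vanishing on $Y_\CC$ would force $g$ to vanish on $Y_\ZZ$ and hence at $x$, contradicting $g(x)\neq 0$ in $\FFp$. You make explicit the step the paper leaves implicit (getting from ``$g$ vanishes on $Y_\CC$'' to $g\in\mathfrak{p}$); note, though, that once you have $g^m\in\mathfrak{p}\,\CC[x_1,\dots,x_n]$ for some $m$ and use $A\hookrightarrow A\otimes_\ZZ\CC$ to conclude $g^m\in\mathfrak{p}$, primeness of $\mathfrak{p}$ already gives $g\in\mathfrak{p}$, so the digression through geometric reducedness over the perfect field $\QQ$ can be dispensed with. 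Your Rabinowitsch-style alternative (adjoining the equation $gt-1$ and applying the theorem in one more variable) is a nice self-contained variant not found in the paper.
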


\begin{proof}
Assume to the contrary that $g$ vanishes on $X_\CC$. 
By Theorem \ref{tSchreyer} we have a component $Y_\ZZ \subset X_\ZZ$ with $x\in Y_\ZZ$ and $Y_\CC \not= \emptyset$. Since
$g$ vanishes on $X_\CC$ and $Y_\CC \not= \emptyset$ is also vanishes on $Y_\CC$ and therefore on $Y_\ZZ$ and
$Y_{\FFp}$. But this contradicts our assumption $g(x) \not=0$.
\end{proof}

\begin{thm}
$m(3) \ge 12$.
\end{thm}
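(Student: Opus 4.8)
The plan is to apply Corollary \ref{cNotVanish} with the explicit system given in the introduction, taking $n=14$ (the number of coefficients of $(P,Q)$ of degree at most three after normalizing the linear part), $I = I_{11} = (s_1,\dots,s_{11}) \subset \ZZ[a_1,\dots,a_{14}]$, the point $x \in X_{\FF_{29}}$ corresponding to the displayed differential equation, and $g = s_{12}$.

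First I would set up the ambient space: the focal values $s_j$ are, by Poincar\'e's construction recalled in the introduction, polynomials with integer coefficients in the $14$ free coefficients of $(P,Q)$, so $X_\ZZ = V(I_{11}) \subset \AA^{14}_\ZZ$ makes sense. Next I would record the computational facts, all obtained by running Frommer's algorithm \cite{martin} modulo $29$ on the given system: namely $s_1(x) = \dots = s_{11}(x) = 0$ in $\FF_{29}$, so that $x \in X_{\FF_{29}}$; that $s_{12}(x) \ne 0$ in $\FF_{29}$; and that the Jacobian matrix $\big(\partial s_i/\partial a_j\big)_{1\le i\le 11,\,1\le j\le 14}$ evaluated at $x$ has rank $11$ over $\FF_{29}$. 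The rank condition says exactly that the tangent space $T_{X_{\FF_{29}},x}$, cut out by the $11$ linear forms $d s_i|_x$, has codimension $11$ in $\AA^{14}_{\FF_{29}}$, i.e. $\codim T_{X_{\FF_{29}},x} = 11 = k$, which is the hypothesis of Theorem \ref{tSchreyer}.

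With these facts in hand the argument is immediate: Corollary \ref{cNotVanish}, applied with $g = s_{12}$, gives that $s_{12}$ does not vanish on $X_\CC = V(I_{11})_\CC$. Hence there is a complex point of $V(s_1,\dots,s_{11})$ at which $s_{12}$ is nonzero, so $s_{12} \notin \rad I_{11}$, and a fortiori $s_{12} \notin I_{11}$. Since all systems of degree at most $3$ with a center satisfy $s_j = 0$ for all $j$, the point furnished by the corollary has $s_1 = \dots = s_{11} = 0$ but $s_{12} \ne 0$, so the vanishing of the first $11$ focal values does not force a center; equivalently $m(3) \ge 12$.

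The main obstacle is not conceptual but computational: one must actually evaluate $s_1,\dots,s_{12}$ and their partial derivatives at the given system modulo $29$, and these polynomials are far too large to write down (as noted, $s_5$ alone has thousands of terms and $s_j$ for $j \ge 6$ are out of reach symbolically). The point of working over $\FF_{29}$ with a \emph{numerical} system is precisely that Frommer's recursive algorithm computes the \emph{value} $s_j(x)$ without ever forming the polynomial $s_j$; the partial derivatives are then obtained by the same algorithm applied to a suitable deformation (or by a finite-difference/dual-number computation over $\FF_{29}$). I would therefore present the verification of the three displayed numerical facts as the content of the proof, citing \cite{centerfocusweb, martin} for the implementation and \cite{reduce} for the independent check, and then invoke Corollary \ref{cNotVanish} to conclude.
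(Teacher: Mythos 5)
Your proposal is correct and follows exactly the paper's argument: the paper's proof is a one-line instruction to verify computationally (via Frommer's algorithm or REDUCE) that the displayed example satisfies the hypotheses of Corollary~\ref{cNotVanish}, namely $s_1(x)=\dots=s_{11}(x)=0$ and $s_{12}(x)\neq 0$ in $\FF_{29}$ together with the Jacobian of $(s_1,\dots,s_{11})$ having rank $11$ at $x$, and then to conclude $s_{12}\notin\rad I_{11}$ as you do. You have simply spelled out the logical chain and the numerical facts that the paper leaves implicit in its terse proof and discusses only in the introduction.
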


\begin{proof}
Use our implementation of Frommers algorithm \cite{martin}, \cite{strudelweb}, \cite{centerfocusweb} or REDUCE \cite{reduce} to check that the example in the introduction satisfies the conditions of 
Corollary \ref{cNotVanish}.
\end{proof}

\def\cprime{$'$} \def\cprime{$'$}


\begin{thebibliography}{vBEL05}

\bibitem[Chr05]{ChristopherEleven}
Colin~J. Christopher.
\newblock Estimating limit cycle bifurcations from centers.
\newblock preprint, 2005.

\bibitem[Fro34]{Frommer}
M.~Frommer.
\newblock {\"U}ber das {A}uftreten von {W}irbeln und {S}trudeln (geschlossener
  und spiraliger {I}ntegralkurven) in der {U}mgebung rationaler
  {U}nbestimmtheitsstellen.
\newblock {\em Math. Ann.}, 109:395--424, 1934.

\bibitem[Hea04]{reduce}
Anthony Hearn.
\newblock {REDUCE}.
\newblock Available at
  \href{http://www.reduce-algebra.com}{http://www.reduce-algebra.com}, 2004.

\bibitem[Sch96]{smallFields}
Frank-Olaf Schreyer.
\newblock Small fields in constructive algebraic geometry.
\newblock In {\em Moduli of vector bundles (Sanda, 1994; Kyoto, 1994)}, volume
  179 of {\em Lecture Notes in Pure and Appl. Math.}, pages 221--228. Dekker,
  New York, 1996.

\bibitem[vBC05]{strudelweb}
H.-Chr.~Graf v.~Bothmer and Martin Cremer.
\newblock A {C}++ program for calculating focal values in characteristic $p$.
\newblock Available at
  \href{http://www-ifm.math.uni-hannover.de/~bothmer/strudel}{http://www-ifm.m%
ath.uni-hannover.de/\~{}bothmer/strudel}, 2005.

\bibitem[vBC07]{martin}
H.-Chr.~Graf v.~Bothmer and Martin Cremer.
\newblock Frommers algorithm.
\newblock {\em NoDEA}, 14(5-6):694--698, 2007.

\bibitem[vBEL05]{newFamily}
H.-Chr.~Graf v.~Bothmer, C.~Erdenberger, and K.~Ludwig.
\newblock A new family of rational surfaces in $\mathbb{P}^4$.
\newblock {\em Journal of Symbolic Computation.}, 29(1):51--60, 2005.

\bibitem[vBK09]{centerfocusweb}
H.-Chr.~Graf v.~Bothmer and Jakob Kr\"oker.
\newblock A improved {C}++ program for calculating focal values in
  characteristic $p$.
\newblock Available at
  \href{http://www.stud.uni-hannover.de/~kroeker/centerfocus/index.html}{http:%
//www.stud.uni-hannover.de/\~{}kroeker/centerfocus/index.html} or at
  \href{http://sourceforge.net/projects/centerfocus/}{http://sourceforge.net/p%
rojects/centerfocus/}, 2009.

\bibitem[vBS05]{irred}
H.-Chr.~Graf v.~Bothmer and F.~O. Schreyer.
\newblock A quick and dirty irreducibility test for multivariate polynomials
  over $\mathbb{F}_q$.
\newblock {\em Experimental Mathematics}, 14(4):415--422, 2005.

\bibitem[vW05]{vWahlGap}
Wolf v.~Wahl.
\newblock personal communication, 2005.

\bibitem[{\.Z}o{\l}95]{ZoladekEleven}
Henryk {\.Z}o{\l}{\c a}dek.
\newblock Eleven small limit cycles in a cubic vector field.
\newblock {\em Nonlinearity}, 8(5):843--860, 1995.

\end{thebibliography}
\end{document}